\theoremstyle{plain}
\newtheorem{theorem}{Theorem}[section]
\newtheorem{corollary}[theorem]{Corollary}
\theoremstyle{definition}
\theoremstyle{remark}
\newcommand{\RR}{{\mathbb R}}
\newcommand{\TT}{{\mathbb T}}
\newcommand{\DD}{{\mathbb D}}
\newcommand{\cD}{{\mathcal D}}
\begin{document}

\title[Carleson measures]{One-box conditions for Carleson measures for the Dirichlet space}

\author[El-Fallah]{Omar El-Fallah}
\address{Laboratoire Analyse et Applications (CNRST URAC03)
\\Universit\'e Mohamed V\\B.~P.~1014 Rabat\\Morocco}
\email{elfallah@fsr.ac.ma}

\author[Kellay]{Karim Kellay}
\address{IMB\\
Universit\'e Bordeaux 1\\
351 cours de la Lib\'eration\\
F-33405 Talence cedex\\France}
\email{karim.kellay@math.u-bordeaux1.fr}

\author[Mashreghi]{Javad Mashreghi}
\address{D\'{e}partement de math\'{e}matiques et de statistique
\\Universit\'{e} Laval\\Qu\'{e}bec (QC) \\ Can\-a\-da G1V 0A6}
\email{javad.mashreghi@mat.ulaval.ca}

\author[Ransford]{Thomas Ransford}
\address{D\'{e}partement de math\'{e}matiques et de statistique
\\Universit\'{e} Laval\\Qu\'{e}bec (QC) \\ Can\-a\-da G1V 0A6}
\email{ransford@mat.ulaval.ca}

\thanks{First author supported by Acad\'emie Hassan II des sciences et techniques. Second author supported by PICS-CNRS.
Third author supported by NSERC.
Fourth author  supported by NSERC and the Canada research chairs program.} 

\subjclass[2010]{Primary 31C25, Secondary 28C99}

\date{February 15, 2013}

\commby{??}

\begin{abstract}
We give a simple proof of the fact that a finite measure $\mu$
on the unit disk is a Carleson measure for the Dirichlet space
if it satisfies the Carleson one-box condition $\mu(S(I))=O(\phi(|I|))$,
where $\phi:(0,2\pi]\to(0,\infty)$ is an increasing function such that
$\int_0^{2\pi}(\phi(x)/x)\,dx<\infty$. We further show that the
integral condition on $\phi$ is sharp.
\end{abstract}

\maketitle

\section{Introduction}

Let $(F,\|\cdot\|_F)$ be a Banach space of measurable
functions defined on a measurable space~$X$. 
A \emph{Carleson measure} for $F$ is
a positive measure $\mu$ on $X$ such that $F$
embeds continuously into $L^2(\mu)$.
In other words, $\mu$ is a Carleson measure for $F$ if
there exists a constant $C$ such that
\[
\int_X|f(x)|^2\,d\mu(x)\le C\|f\|_F^2
\qquad(f\in F).
\]

Carleson measures  were introduced
by Carleson  \cite{Ca62} in his solution to the corona problem.
He considered the case where $X=\DD$, the unit disk, and $F=H^p$,  the Hardy spaces.
In this case he obtained a rather simple geometric characterization of these measures.
Given an arc $I$ in the unit circle, let us write
$|I|$ for its arclength, 
and $S(I)$ for the associated Carleson box, defined by
\[
S(I):=\{re^{i\theta}:1-|I|<r<1,~e^{i\theta}\in I\}.
\]
Then, for each $p\in[1,\infty]$, a finite measure $\mu$ on $\DD$ is a Carleson measure
for $H^p$ if and only if
\begin{equation}\label{E:CarlHp}
\mu(S(I))=O(|I|)
\qquad(|I|\to0).
\end{equation}

In this article, 
we are interested in the case where $X=\DD$ and $F=\cD$,
the Dirichlet space. By definition, $\cD$ is the space 
of  functions $f$ holomorphic in  $\DD$ 
whose Dirichlet integral is finite, i.e.,
\[
\cD(f):=\frac{1}{\pi}\int_\DD |f'(z)|^2\,dA(z)<\infty.
\]
We can make $\cD$ into a Hilbert space by defining 
\[
\|f\|_\cD^2:=|f(0)|^2+\cD(f)
\qquad(f\in\cD).
\]

Carleson measures for $\cD$ arise in several contexts,
notably in characterizing multipliers \cite{St80}
and interpolation sequences \cite{Bi94,Bo05,MS89,Se04}.
The problem of characterizing Carleson measures themselves
has been studied by a number of authors over the years.
The analogue of \eqref{E:CarlHp} is the condition
\begin{equation}\label{E:CarlD}
\mu(S(I))=O\Bigl(\Bigl(\log\frac{1}{|I|}\Bigr)^{-1}\Bigl(\log\log\frac{1}{|I|}\Bigr)^{-\alpha}\Bigr)
\qquad(|I|\to0),
\end{equation}
which, for a finite measure $\mu$ on $\DD$, is known to be:
\begin{itemize}
\item necessary for $\mu$ to be Carleson for $\cD$ if $\alpha=0$;
\item sufficient for $\mu$ to be Carleson for $\cD$ if $\alpha>1$;
\item neither necessary nor sufficient if $0<\alpha\le 1$.
\end{itemize}
The gap between necessity and sufficiency means that one cannot
completely characterize Carleson measures for $\cD$  in terms of a one-box
condition like \eqref{E:CarlHp} or \eqref{E:CarlD}. 
There do exist complete characterizations, 
but they are of a more complicated nature, 
see for example  \cite{ARS02,ARS08,ARSW11,KS88,St80}.

We shall discuss 
the necessity part of \eqref{E:CarlD} briefly at the end of the paper.
However, our main interest is the sufficiency part of \eqref{E:CarlD},
which is a consequence of the following more precise  results.

\begin{theorem}\label{T:Wynn}
Let $\mu$ be a finite positive Borel measure on $\DD$ satisfying
\begin{equation}\label{E:Wynn}
\mu(S(I))=O(\phi(|I|))
\qquad(|I|\to0),
\end{equation}
where $\phi:(0,2\pi]\to(0,\infty)$ is an increasing function such that 
\begin{equation}\label{E:phicond}
\int_0^{2\pi}\frac{\phi(x)}{x}\,dx<\infty.
\end{equation}
Then $\mu$ is a Carleson measure for $\cD$.
\end{theorem}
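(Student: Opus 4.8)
The plan is to establish the embedding $\int_\DD|f|^2\,d\mu\le C\|f\|_\cD^2$ by a dyadic telescoping argument through scales. Since $\mu$ is finite, I would first dispose of the constant: from $\int_\DD|f|^2\,d\mu\le 2|f(0)|^2\mu(\DD)+2\int_\DD|f-f(0)|^2\,d\mu$ together with $|f(0)|^2\le\|f\|_\cD^2$, it suffices to bound $\int_\DD|f-f(0)|^2\,d\mu$ by $C\cD(f)$. The heart of the matter is a pointwise estimate controlling $|f(z)-f(0)|$ by the local Dirichlet energy of $f$ in the ``tower'' of Carleson boxes lying above $z$.

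To set this up, fix $z\in\DD$ with $1-|z|\approx2^{-M}$, and for $0\le m\le M$ let $Q^{(m)}(z)$ denote the Whitney-type box at scale $2^{-m}$ sitting above $z$, so that $Q^{(0)}(z)$ is a box of size $\approx1$ near the boundary and $Q^{(M)}(z)$ is the box of size $\approx2^{-M}$ containing $z$. Writing $\cD^{(m)}(z):=\tfrac1\pi\int_{Q^{(m)}(z)}|f'|^2\,dA$, I would telescope $f(z)-f(0)$ through the averages of $f$ over successive boxes in this tower. Each consecutive difference is controlled by a Poincar\'e inequality on a single box, $|f_{Q^{(m)}}-f_{Q^{(m-1)}}|^2\lesssim\cD^{(m)}(z)$, the Poincar\'e constants being uniform in the scale because consecutive boxes are mutually comparable; the value $f(z)$ itself is reached from the finest average via the mean value property of the holomorphic $f$, which sidesteps the failure of the borderline Sobolev embedding $W^{1,2}\not\hookrightarrow L^\infty$. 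This yields
\[
|f(z)-f(0)|\le C\sum_{m=0}^{M}\cD^{(m)}(z)^{1/2}.
\]

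Next I would apply Cauchy--Schwarz with a weight sequence $(b_m)$ to be chosen, obtaining
\[
|f(z)-f(0)|^2\le C\Big(\sum_{m=0}^{M}b_m\Big)\Big(\sum_{m=0}^{M}b_m^{-1}\,\cD^{(m)}(z)\Big).
\]
Demanding $\sum_m b_m<\infty$ makes the first factor uniformly bounded, so integrating in $\mu$ and applying Fubini gives
\[
\int_\DD|f-f(0)|^2\,d\mu\le C\sum_{m\ge0}b_m^{-1}\int_\DD|f'(w)|^2\,\mu\big(E_m(w)\big)\,dA(w),
\]
where $E_m(w)=\{z:w\in Q^{(m)}(z)\}$ is, for $w$ at scale $2^{-m}$, exactly a Carleson box $S(I)$ with $|I|\approx2^{-m}$. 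The one-box hypothesis \eqref{E:Wynn} then bounds $\mu(E_m(w))\le C\phi(2^{-m})$, supported on the annulus $1-|w|\approx2^{-m}$. Choosing $b_m=\phi(2^{-m})$ collapses the weights, leaving $\int_\DD|f-f(0)|^2\,d\mu\le C\sum_m\int_{\{1-|w|\approx2^{-m}\}}|f'|^2\,dA=C\cD(f)$, provided the first factor $\sum_m b_m=\sum_m\phi(2^{-m})$ is finite; and since $\phi$ is increasing, $\sum_m\phi(2^{-m})\asymp\int_0^{2\pi}(\phi(x)/x)\,dx$, so finiteness is precisely the hypothesis \eqref{E:phicond}.

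I expect the main obstacle to be the pointwise telescoping estimate: making the tower of boxes precise, verifying that the Poincar\'e constants are uniform across scales, and handling the base value $f(z)$ cleanly through the mean value property. Once that estimate is secured, the remaining steps---Cauchy--Schwarz, Fubini, and the identification of $E_m(w)$ with a Carleson box---are routine, and the integral condition \eqref{E:phicond} enters only at the very end, as the exact requirement that the weight sum $\sum_m\phi(2^{-m})$ converge.
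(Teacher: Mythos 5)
Your proposal is correct, and it takes a genuinely different route from the paper. The paper's proof is kernel-theoretic: it quotes a duality lemma of Arcozzi--Rochberg--Sawyer (Theorem~\ref{T:Carlabs}), reduces by a Schur-type Cauchy--Schwarz argument to the one-variable condition $\sup_{w\in\DD}\int_\DD\log(2/|1-\overline{w}z|)\,d\mu(z)<\infty$ (Corollary~\ref{C:Carllog}), and then verifies this condition from \eqref{E:Wynn} by a single integration by parts, with \eqref{E:phicond} appearing as the finiteness of $\int_0^{16}\phi(s)s^{-1}\,ds$. You instead prove the embedding directly by a real-variable chaining argument: Whitney-box averages, weighted Cauchy--Schwarz over scales, and Fubini, with \eqref{E:phicond} entering as $\sum_m\phi(2^{-m})<\infty$. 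The structural point on which your argument stands or falls --- and you got it right --- is that the $Q^{(m)}(z)$ are Whitney-type boxes, each confined to its own annulus $1-|w|\approx 2^{-m}$, rather than the nested Carleson boxes $S(I_m)$: with nested boxes, a point $w$ with $1-|w|\approx 2^{-j}$ would lie in $E_m(w)$ for \emph{all} $m\le j$, Fubini would produce an extra factor $\log\frac{1}{1-|w|}$, and the final bound would fail; with Whitney boxes each $w$ is counted at $O(1)$ scales and the annuli tile $\DD$, giving $C\,\cD(f)$. The technical debts you flag are real but standard: uniform Poincar\'e constants hold because consecutive box pairs are uniformly bi-Lipschitz equivalent to a fixed configuration, and the mean value property handles both ends of the chain (at $z$, and also at $0$, where one unit-scale Poincar\'e step contributes a harmless $C\,\cD(f)^{1/2}$ term, absorbed since $\mu$ is finite). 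One pedantic adjustment: $E_m(w)$ sits inside a Carleson box of size $C'2^{-m}$ with $C'>1$, and $\phi$ is not assumed doubling, so take $b_m=\phi(C'2^{-m})$ (extending $\phi$ by the constant $\phi(2\pi)$ above $2\pi$, as the paper also does); the weight sum still converges by the same dyadic comparison. As for what each approach buys: the paper's proof is only a few lines, but leans on the quoted ARS duality and the reproducing-kernel structure of $\cD$; yours is longer and more technical, but elementary and self-contained, makes transparent that \eqref{E:phicond} is precisely a summability condition over dyadic scales, and would adapt to settings where no convenient reproducing kernel is available.
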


The condition \eqref{E:phicond} in Theorem~\ref{T:Wynn}
is sharp in the following sense.

\begin{theorem}\label{T:Wynnsharp}
Let $\phi:(0,2\pi]\to(0,\infty)$ be a continuous increasing function 
such that $\phi(x)/x$ is strictly decreasing
and 
\begin{equation}\label{E:Wynnsharp}
\int_0^{2\pi}\frac{\phi(x)}{x}\,dx=\infty.
\end{equation}
Then there exists a finite positive Borel measure
$\mu$ on $\DD$ that satisfies \eqref{E:Wynn} but is not a Carleson measure for $\cD$.
\end{theorem}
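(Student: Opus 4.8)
The plan is to construct a counterexample measure $\mu$ supported on a sequence of points approaching the boundary, arranged so that the one-box condition \eqref{E:Wynn} holds while a specific test function exhibits unbounded $L^2(\mu)$-norm relative to its Dirichlet norm. The natural strategy is to exploit the logarithmic nature of the Dirichlet space. The key heuristic is that the Dirichlet space "sees" capacity rather than length: a point near the boundary at distance $t$ from $\TT$ contributes a factor roughly $(\log(1/t))^{-1}$ to capacity, whereas a Carleson box of sidelength $|I|=t$ has $\mu$-mass controlled by $\phi(t)$. The divergence \eqref{E:Wynnsharp} is exactly the threshold at which one can pile up enough mass, spread over dyadically many scales, to defeat any embedding constant.

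Concretely, I would work with a dyadic decomposition. For each $n\ge 1$, set $t_n=2^{-n}$ and place mass on $\cD$-separated points (or on a single arc's worth of points) at radius $1-t_n$. On each dyadic scale I would distribute $\mu$ so that each box $S(I)$ with $|I|\approx t_n$ receives mass comparable to $\phi(t_n)$, using roughly $1/t_n$ disjoint arcs at that scale. The total mass on scale $n$ is then of order $\phi(t_n)/t_n\cdot t_n = \phi(t_n)$, wait—more carefully, the number of boxes times mass-per-box must respect finiteness of $\mu$, so I would instead place on scale $n$ a total mass of order $\phi(t_n)$ concentrated to meet the one-box bound, and then verify that $\sum_n \phi(t_n)<\infty$ follows from $\int_0^{2\pi}\phi(x)/x\,dx$, and that $\sum_n$ of the relevant capacity-weighted quantities diverges precisely because $\int \phi(x)/x\,dx=\infty$. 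The hypothesis that $\phi(x)/x$ is strictly decreasing guarantees that $\phi(t_n)\approx \int_{t_n}^{t_{n-1}}\phi(x)/x\,dx$, converting the integral \eqref{E:Wynnsharp} into a divergent series $\sum_n\phi(t_n)=\infty$ over the dyadic scales.

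The test functions would be the analogues of the reproducing kernels or, more robustly, logarithmic bumps $f_N$ adapted to the top $N$ scales: functions whose Dirichlet norm grows like the capacity (hence like $\sum_{n\le N}$ of a slowly-growing quantity) but whose $L^2(\mu)$-norm grows strictly faster because each scale contributes mass $\phi(t_n)$ that the function is forced to be large on. To make the embedding constant blow up, I would show $\|f_N\|_{L^2(\mu)}^2/\|f_N\|_\cD^2\to\infty$ as $N\to\infty$. An alternative, cleaner route is to invoke a known characterization of Carleson measures (for instance the capacitary or tree condition from \cite{ARS02,ARSW11,St80}) and simply verify that the constructed $\mu$ fails it while satisfying \eqref{E:Wynn}; this sidesteps the explicit test-function estimates.

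The main obstacle I anticipate is the bookkeeping that reconciles three competing requirements simultaneously: (i) the one-box condition $\mu(S(I))=O(\phi(|I|))$ must hold at every scale and every location, not merely on average, which forces an even spreading of mass across the $\approx 1/t_n$ arcs at scale $n$; (ii) the total mass $\mu(\DD)$ must be finite, giving $\sum_n \phi(t_n)<\infty$; and (iii) the failure of the Carleson property must follow from $\int\phi(x)/x\,dx=\infty$. Balancing (ii) and (iii)—finite total mass yet divergent capacitary sum—is delicate, and it is precisely here that the logarithmic weight in the Dirichlet energy does the work: the capacity of the relevant configuration carries an extra $\log(1/t_n)\approx n$ factor in the denominator compared with the mass, so that the energy sum converges while the embedding fails. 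I would monitor carefully the geometry of how the support points at different scales interact, since overlapping contributions to the Dirichlet integral could otherwise spoil the lower bound on $\|f_N\|_\cD^{-2}\|f_N\|_{L^2(\mu)}^2$.
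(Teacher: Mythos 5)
There is a genuine gap, and it is the one your own list of requirements exposes: your bookkeeping demands both $\sum_n\phi(t_n)<\infty$ and $\sum_n\phi(t_n)=\infty$ simultaneously. You fix dyadic scales $t_n=2^{-n}$ and want each scale to carry total mass of order $\phi(t_n)$, so finiteness of $\mu$ forces $\sum_n\phi(2^{-n})<\infty$ (your requirement (ii)). But, as you yourself observe, monotonicity of $\phi$ gives $\phi(2^{-n})\asymp\int_{2^{-n}}^{2^{-n+1}}\phi(x)/x\,dx$, so the hypothesis \eqref{E:Wynnsharp} says precisely that $\sum_n\phi(2^{-n})=\infty$. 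Thus, under the very hypothesis of the theorem, no finite measure can distribute mass $\approx\phi(2^{-n})$ per dyadic scale; the plan is internally inconsistent before any test-function or capacity estimate is attempted. Nor can you rescue it by adjusting the geometry: stacking the scales above a common boundary point makes a single box of size $2^{-m}$ capture $\sum_{n\ge m}\phi(2^{-n})=\infty$, violating \eqref{E:Wynn}, while spreading each scale over $\approx 2^{n}$ boxes with mass $\phi(2^{-n})$ each makes the total mass per scale $\approx 2^{n}\phi(2^{-n})$, which is even larger.

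The paper resolves exactly this tension by inverting your choice: it fixes the masses and lets $\phi$ dictate the scales. Normalize $\phi(1)=1$ and define $l_n$ by $\phi(l_n)=2^{-n}$ (strict monotonicity of $\phi(x)/x$ gives $l_{n+1}<l_n/2$). Build the generalized Cantor set $E$ with arc lengths $l_n$, let $\sigma$ be its Cantor--Lebesgue measure, push copies $\mu_n$ onto circles $|z|=1-\delta_n$, and set $\mu=\sum_n 2^{-n}\mu_n$. Then $\mu(\DD)=2<\infty$ by fiat; the one-box condition \eqref{E:Wynn} holds because an arc $I$ with $l_{n+1}\le|I|<l_n$ meets at most two $n$-th stage arcs, giving $\sigma(I)\le 4\phi(|I|)$ and $\mu(S(I))\le 8\phi(|I|)$; and the divergence \eqref{E:Wynnsharp} reappears not as infinite mass but as
\[
\sum_{n\ge0}2^{-n}\log(1/l_n)=\infty,
\]
i.e.\ as $c(E)=0$ by the classical Cantor-set capacity criterion. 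The failure of the Carleson property then follows from Stegenga's characterization (Theorem~\ref{T:Stegenga}) applied to the finite unions of arcs $E_{\delta_n}$: by upper semicontinuity of capacity one can choose $\delta_n$ with $c(E_{\delta_n})<3^{-n}$, while $\mu$ gives the corresponding union of boxes mass at least $2^{-n}$. Your closing instinct --- invoke a capacitary characterization rather than construct explicit test functions --- is the right one and is exactly what the paper does; but it only becomes usable once the measure is built on scales adapted to $\phi$ rather than dyadic scales, and once the mass sits on a Cantor set: concentrated enough to have zero capacity, yet spread enough at every scale to satisfy the one-box bound.
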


Theorem~\ref{T:Wynn} provides a justification of the sufficiency of \eqref{E:CarlD}
when $\alpha>1$, and Theorem~\ref{T:Wynnsharp} demonstrates its insufficiency
when $\alpha\le1$.

Both theorems were recently obtained by Wynn \cite{Wy11} 
under additional assumptions on $\phi$
(see Theorems~1.3 and~1.5 in that paper, as well as the discussion in \S4.2). 
Wynn's proofs are rather indirect, since they are a by-product
of his work on the so-called discrete Weiss conjecture. 
Our purpose is to give  simpler and more direct proofs.
We shall prove Theorems~\ref{T:Wynn} and \ref{T:Wynnsharp}
in \S\ref{S:pfthm1} and \S\ref{S:pfthm2} respectively,
and conclude in \S\ref{S:Carlnec} with some brief remarks about 
necessity.

\section{Proof of Theorem~\ref{T:Wynn}}\label{S:pfthm1}

Let us write $\langle\cdot,\cdot\rangle_\cD$ for the inner product on $\cD$.
Thus $\langle f,f\rangle_\cD=\|f\|_\cD^2$ for all $f\in\cD$. Also, let
\[
k(z,w)=k_w(z):=1+\log\Bigl(\frac{1}{1-\overline{w}z}\Bigr)
\qquad(z,w\in\DD).
\]
It is easy to verify that $k$ is a reproducing kernel for $\cD$,
in the sense that
\[
f(w)=\langle f,k_w\rangle_\cD 
\qquad(f\in\cD,~w\in\DD).
\]
We shall need the following dual formulation of the notion of Carleson measure. 
It is a special case of an abstract result of Arcozzi, Rochberg and Sawyer
\cite[Lemma~24]{ARS08}.

\begin{theorem}\label{T:Carlabs}
Let $\mu$ be a finite positive Borel measure on $\DD$.
Then
\begin{equation}\label{E:Carlabs}
\sup_{\|f\|_\cD\le1}\int_\DD |f(z)|^2\,d\mu(z)
=\sup_{\|g\|_{L^2(\mu)}\le1}\Bigl|\int_\DD\int_\DD 
k(w,z)g(z)\overline{g(w)}\,d\mu(z)\,d\mu(w)\Bigr|.
\end{equation}
\end{theorem}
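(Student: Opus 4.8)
The plan is to interpret both sides of \eqref{E:Carlabs} as the squared norm of a single operator and then invoke Hilbert-space duality. Let $J\colon\cD\to L^2(\mu)$ denote the embedding $Jf=f$, so that the left-hand side is exactly $\|J\|^2$, with the convention that this equals $+\infty$ when $J$ is unbounded (i.e. when $\mu$ fails to be Carleson). The first step is to compute the adjoint $J^*\colon L^2(\mu)\to\cD$. Using the reproducing property $f(w)=\langle f,k_w\rangle_\cD$, the defining relation $\langle Jf,g\rangle_{L^2(\mu)}=\langle f,J^*g\rangle_\cD$, and the fact that $k$ is Hermitian (indeed $\overline{k(z,w)}=k(w,z)$), one finds
\[
(J^*g)(w)=\int_\DD k(w,z)\,g(z)\,d\mu(z)\qquad(w\in\DD),
\]
that is, $J^*g=\int_\DD g(z)\,k_z\,d\mu(z)$ as a $\cD$-valued integral.

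Next I would evaluate $\|J^*g\|_\cD^2$. Writing $J^*g$ as the above superposition of reproducing kernels and using $\langle k_z,k_w\rangle_\cD=k(w,z)$, a Fubini interchange gives
\[
\|J^*g\|_\cD^2=\langle J^*g,J^*g\rangle_\cD
=\int_\DD\int_\DD k(w,z)\,g(z)\,\overline{g(w)}\,d\mu(z)\,d\mu(w).
\]
In particular the double integral on the right-hand side of \eqref{E:Carlabs} is automatically real and nonnegative, so the absolute value may be dropped, and the right-hand side equals $\sup_{\|g\|_{L^2(\mu)}\le1}\|J^*g\|_\cD^2=\|J^*\|^2$. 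The theorem then reduces to the identity $\|J\|^2=\|J^*\|^2$, the standard fact that an operator and its adjoint have equal norm.

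To make all of this rigorous is the main obstacle, and it has two facets. First, when $\mu$ is not a Carleson measure the operator $J$ is unbounded and the manipulations above are only formal; I would circumvent this by first proving the identity for the truncations $\mu_r:=\mu|_{\{|z|\le r\}}$, for which $J$ is genuinely bounded (point evaluations being bounded on compact subsets of $\DD$), and then letting $r\to1$. The passage to the limit is a monotone argument, using that $\int_\DD|f|^2\,d\mu_r\uparrow\int_\DD|f|^2\,d\mu$ to identify the limits of both sides with the quantities for $\mu$. Second, I must justify the $\cD$-valued integral defining $J^*g$ and the Fubini interchange above; for the compactly supported $\mu_r$ this is immediate, since $k$ is bounded on the relevant region.

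A cleaner symmetric formulation, which I would probably adopt so as to avoid handling the possibly-unbounded adjoint directly, is to introduce the sesquilinear form $B(f,g):=\int_\DD f\,\overline g\,d\mu$ on $\cD\times L^2(\mu)$. Duality of the $L^2(\mu)$ norm gives $\sup_{\|g\|\le1}|B(f,g)|=\|f\|_{L^2(\mu)}$, so the left-hand side of \eqref{E:Carlabs} equals $\bigl(\sup_{\|f\|_\cD\le1}\sup_{\|g\|\le1}|B(f,g)|\bigr)^2$; on the other hand, Riesz representation in $\cD$ together with the identity $\langle f,J^*g\rangle_\cD=B(f,g)$ shows that $\|J^*g\|_\cD=\sup_{\|f\|_\cD\le1}|B(f,g)|$, so the right-hand side equals $\bigl(\sup_{\|g\|\le1}\sup_{\|f\|_\cD\le1}|B(f,g)|\bigr)^2$. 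The two expressions coincide merely by interchanging the two suprema, which is precisely the assertion of the theorem.
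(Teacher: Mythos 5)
Your proposal is correct in its essentials, but the comparison here is somewhat one-sided: the paper does not prove Theorem~\ref{T:Carlabs} at all --- it quotes it as a special case of an abstract result of Arcozzi, Rochberg and Sawyer \cite[Lemma~24]{ARS08}. So what you have produced is not an alternative to an internal argument, but a self-contained proof of the special case that the paper delegates to a citation, and the mechanism you use is exactly the standard duality device underlying that abstract lemma: identify the left side of \eqref{E:Carlabs} with $\|J\|^2$ for the inclusion $J\colon\cD\to L^2(\mu)$, compute $J^*g=\int_\DD g(z)\,k_z\,d\mu(z)$ from the reproducing property, expand $\|J^*g\|_\cD^2$ into the double integral using $\langle k_z,k_w\rangle_\cD=k_z(w)=k(w,z)$ (your formulas are consistent with the paper's kernel conventions), and invoke $\|J\|=\|J^*\|$. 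Your truncation $\mu_r:=\mu|_{\{|z|\le r\}}$ is the right way to handle the case where $J$ is unbounded, and the limiting argument goes through on both sides, provided one makes explicit what the right-hand supremum means for a non-Carleson $\mu$: the double integral is a complex-valued Lebesgue integral, so the supremum should be read as running over those $g$ for which it converges absolutely; with that reading, dominated convergence transfers the value from $g\mathbf{1}_{\{|z|\le r\}}$ to $g$, and simultaneously shows the integral is real and nonnegative in the limit, as you claim. One caveat on your final paragraph: the ``cleaner symmetric formulation'' is glibber than it looks. Interchanging the two suprema of $|B(f,g)|$ is of course trivial, but identifying $\sup_{\|f\|_\cD\le1}|B(f,g)|^2$ with the double integral on the right of \eqref{E:Carlabs} is precisely the Bochner-integral and Fubini computation of $\|J^*g\|_\cD^2$ from your second paragraph, together with the same convergence issues for unbounded $J^*$; so that reformulation reorganizes the proof pleasantly but does not let you bypass either the kernel computation or the truncation. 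The version with the truncation spelled out is the one you should keep.
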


\begin{corollary}\label{C:Carlabs}
A finite positive measure $\mu$ on $\DD$ is a Carleson measure for $\cD$ if and only if
\begin{equation}\label{E:CarlabsC}
\sup_{\|g\|_{L^2(\mu)}\le1}\int_\DD\int_\DD
\log\Bigl(\frac{2}{|1-\overline{w}z|}\Bigr)|g(z)||g(w)|\,d\mu(z)\,d\mu(w)<\infty.
\end{equation}
\end{corollary}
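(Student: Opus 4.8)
The plan is to read the corollary straight off Theorem~\ref{T:Carlabs}: by definition $\mu$ is a Carleson measure for $\cD$ exactly when the left-hand side of \eqref{E:Carlabs} is finite, so it suffices to show that the right-hand side of \eqref{E:Carlabs} is finite if and only if the supremum in \eqref{E:CarlabsC} is finite. Write $A$ for the supremum in \eqref{E:Carlabs} and $B$ for the supremum in \eqref{E:CarlabsC}; the target is $A<\infty\iff B<\infty$. Before comparing, I would make two reductions. First, since $k$ is a reproducing kernel it is positive definite, so the quadratic form $Q(g):=\int_\DD\int_\DD k(w,z)g(z)\overline{g(w)}\,d\mu(z)\,d\mu(w)$ is real and nonnegative for every $g\in L^2(\mu)$; hence the modulus in \eqref{E:Carlabs} can be dropped and $A=\sup_{\|g\|_{L^2(\mu)}\le1}Q(g)$. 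Second, since the integrand in \eqref{E:CarlabsC} depends on $g$ only through $|g|$, and replacing $g$ by $|g|$ preserves the $L^2(\mu)$ norm, the value of $B$ is unchanged if we restrict the supremum to $g\ge0$.

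The crux is a pointwise comparison of the two kernels. Writing $\ell(z,w):=\log(2/|1-\overline w z|)$ and using $|1-\overline w z|=|1-\overline z w|<2$, I would record that $\ell$ is strictly positive and that $\ell(z,w)=\Re k(z,w)-(1-\log 2)$, where $\Re k(z,w)=1+\log(1/|1-\overline w z|)$ is symmetric in $z$ and $w$. Since $1-\log 2>0$ this gives the clean one-sided bound $\ell(z,w)\le\Re k(z,w)$. In the opposite direction, because $\Re(1-\overline z w)>0$ the argument of $1/(1-\overline z w)$ lies in $(-\pi/2,\pi/2)$, so the imaginary part of $\log(1/(1-\overline z w))$ is bounded by $\pi/2$; combining this with the computation of $\Re k$ I expect an estimate of the form $|k(w,z)|\le\ell(z,w)+C_0$ for an absolute constant $C_0$.

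For $A<\infty\Rightarrow B<\infty$: for real $g\ge0$ with $\|g\|_{L^2(\mu)}\le1$, the symmetry of $\Re k$ gives $Q(g)=\int_\DD\int_\DD\Re k(z,w)\,g(z)g(w)\,d\mu(z)\,d\mu(w)$, and the one-sided bound then yields $\int_\DD\int_\DD\ell(z,w)g(z)g(w)\,d\mu(z)\,d\mu(w)\le Q(g)\le A$; taking the supremum over such $g$ gives $B\le A$. For the reverse implication $B<\infty\Rightarrow A<\infty$: for arbitrary $g$ with $\|g\|_{L^2(\mu)}\le1$, set $h=|g|$, so that $Q(g)=|Q(g)|\le\int_\DD\int_\DD|k(w,z)|h(z)h(w)\,d\mu(z)\,d\mu(w)\le\int_\DD\int_\DD\ell(z,w)h(z)h(w)\,d\mu(z)\,d\mu(w)+C_0\bigl(\int_\DD h\,d\mu\bigr)^2$. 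The first term is at most $B$, and by Cauchy--Schwarz $\bigl(\int_\DD h\,d\mu\bigr)^2\le\mu(\DD)\int_\DD h^2\,d\mu\le\mu(\DD)$, so $A\le B+C_0\mu(\DD)<\infty$.

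The step I expect to be the main obstacle is this reverse implication, and specifically the upper estimate for $|k(w,z)|$. A purely multiplicative bound $|k(w,z)|\le C\ell(z,w)$ is false: along the ``anti-diagonal'' where $z\approx-w$ approach the boundary, $\ell(z,w)\to0$ while $|k(w,z)|$ stays bounded below, so the two kernels are not comparable there. The remedy is precisely the additive constant $C_0$, whose contribution is then absorbed by the Cauchy--Schwarz estimate above; it is exactly the hypothesis that $\mu$ is \emph{finite} that lets this happen and makes the two conditions equivalent.
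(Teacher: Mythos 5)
Your proof is correct, and it is essentially the argument the paper leaves implicit: the corollary is stated there without proof as an immediate consequence of Theorem~\ref{T:Carlabs}, the point being exactly your pointwise comparison $\ell(z,w)\le\Re k(z,w)\le|k(w,z)|\le\ell(z,w)+C_0$ together with the absorption of the additive constant $C_0$ via the finiteness of $\mu$. Your closing remark that the two kernels are not multiplicatively comparable (so the additive constant, and hence finiteness of $\mu$, is genuinely needed) correctly identifies why the statement is phrased for finite measures.
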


\begin{corollary}\label{C:Carllog}
A finite positive measure $\mu$ on $\DD$ is a Carleson measure for $\cD$ provided that
\begin{equation}\label{E:Carllog}
\sup_{w\in\DD}\int_\DD\log\Bigl(\frac{2}{|1-\overline{w}z|}\Bigr)\,d\mu(z)<\infty.
\end{equation}
\end{corollary}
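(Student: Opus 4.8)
The plan is to deduce condition \eqref{E:CarlabsC} from the hypothesis \eqref{E:Carllog} and then simply invoke Corollary~\ref{C:Carlabs}. Write
\[
K(z,w):=\log\Bigl(\frac{2}{|1-\overline{w}z|}\Bigr)
\qquad(z,w\in\DD).
\]
Two features of this kernel will do all the work. First, it is nonnegative: since $|1-\overline{w}z|\le 1+|w||z|<2$, we have $K(z,w)\ge0$ throughout $\DD\times\DD$. Second, it is symmetric, because $|1-\overline{w}z|=|\overline{1-\overline{w}z}|=|1-w\overline{z}|$, so $K(z,w)=K(w,z)$. Let $M$ denote the (finite, by hypothesis) value of the supremum in \eqref{E:Carllog}.

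The heart of the argument is a one-line Schur-type estimate. Fix $g\in L^2(\mu)$ with $\|g\|_{L^2(\mu)}\le1$, and apply the elementary inequality $|g(z)||g(w)|\le\tfrac12\bigl(|g(z)|^2+|g(w)|^2\bigr)$ inside the double integral of \eqref{E:CarlabsC}. Because the integrand $K(z,w)|g(z)||g(w)|$ is nonnegative, Tonelli's theorem applies freely and there is no issue interchanging the order of integration. Using the symmetry $K(z,w)=K(w,z)$, the two terms produced by the inequality are equal after relabelling, so
\[
\int_\DD\int_\DD K(z,w)|g(z)||g(w)|\,d\mu(z)\,d\mu(w)
\le\int_\DD|g(z)|^2\Bigl(\int_\DD K(z,w)\,d\mu(w)\Bigr)d\mu(z).
\]
The inner integral is at most $M$ by \eqref{E:Carllog}, so the right-hand side is at most $M\|g\|_{L^2(\mu)}^2\le M$.

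Taking the supremum over all $g$ with $\|g\|_{L^2(\mu)}\le1$ shows that the quantity in \eqref{E:CarlabsC} is bounded by $M<\infty$, and Corollary~\ref{C:Carlabs} then gives at once that $\mu$ is a Carleson measure for $\cD$. I do not expect any serious obstacle here: the only points that genuinely need checking are the nonnegativity and symmetry of $K$ (which legitimise both the arithmetic--geometric mean step and the use of Tonelli), and the observation that the hypothesis \eqref{E:Carllog} is precisely a uniform bound on the inner integral $\int_\DD K(z,w)\,d\mu(w)$. The whole content is that controlling the single integral of the kernel against $\mu$ controls the full quadratic form, which is exactly what the Schur test delivers.
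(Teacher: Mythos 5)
Your proof is correct and follows essentially the same route as the paper: both deduce \eqref{E:CarlabsC} from \eqref{E:Carllog} by a Schur-type estimate that exploits the symmetry and nonnegativity of the kernel, and then invoke Corollary~\ref{C:Carlabs}. The only cosmetic difference is that you use the inequality $|g(z)||g(w)|\le\tfrac12\bigl(|g(z)|^2+|g(w)|^2\bigr)$ where the paper applies the Cauchy--Schwarz inequality; both yield the same bound $M\|g\|_{L^2(\mu)}^2$.
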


\begin{proof}
It suffices to check that \eqref{E:Carllog} implies \eqref{E:CarlabsC}.
Let  $M$ be the supremum in \eqref{E:Carllog},
and write $L(w,z):=\log(2/|1-\overline{w}z|)$.
By the Cauchy--Schwarz inequality and the fact that $L(w,z)=L(z,w)$,
we have
\begin{align*}
\int_\DD\int_\DD L(w,z)|g(z)||g(w)|\,d\mu(z)\,d\mu(w)
&\le \int_\DD\int_\DD L(z,w)|g(w)|^2\,d\mu(z)\,d\mu(w)\\
&\le M\|g\|_{L^2(\mu)}^2.\qedhere
\end{align*}
\end{proof}

\begin{proof}[Proof of Theorem~\ref{T:Wynn}]
It suffices to show that \eqref{E:Wynn}
implies \eqref{E:Carllog}.
In establishing \eqref{E:Carllog},
we can restrict our attention to those $w$
with $1/2<|w|<1$, since the supremum over the remaining $w$
is clearly finite. For convenience, we extend the domain
of definition of $\phi$ to the whole of $\RR^+$ by setting
$\phi(t):=\phi(2\pi)$ for $t>2\pi$.

Fix $w$ with $1/2<|w|<1$. Using Fubini's theorem to integrate by parts, we have
\[
\int_\DD\log\Bigl(\frac{2}{|1-\overline{w}z|}\Bigr)\,d\mu(z)
=\int_{t=0}^2\mu\Bigl(\{z\in\DD:|1-\overline{w}z|\le t\}\Bigr)
\frac{1}{t}\,dt.
\]
Now $\{z\in\DD:|1-\overline{w}z|\le t\}=\{z\in\DD:|z-1/\overline{w}|\le t/|w|\}$,
which is contained in $S(I)$ 
for some arc $I$ with $|I|=8t$.
By \eqref{E:Wynn}, we therefore have
\[
\mu\Bigl(\{z\in\DD:|1-\overline{w}z|\le t\}\Bigr)
\le C\phi(8t),
\]
where $C$ is a constant independent of $w$ and $t$.
We thus obtain
\[
\int_\DD\log\Bigl(\frac{2}{|1-\overline{w}z|}\Bigr)\,d\mu(z)
\le \int_{t=0}^2 C\phi(8t)\frac{1}{t}\,dt
=\int_{s=0}^{16}C\frac{\phi(s)}{s}\,ds.
\]
By \eqref{E:phicond}, this last integral is finite.
This gives \eqref{E:Carllog}.
\end{proof}

\section{Proof of Theorem~\ref{T:Wynnsharp}}\label{S:pfthm2}

The proof of Theorem~\ref{T:Wynnsharp} is inspired by a construction
of Stegenga \cite[\S4]{St80}.
Translated into our context, his result is 
essentially the special case of Theorem~\ref{T:Wynnsharp}
in which $\phi(x):=1/\log(1/x)$.
The construction proceeds via a characterization
of Carleson measures for $\cD$, also due to Stegenga.
This characterization is expressed in terms of logarithmic capacity,
so we take a moment to define this notion and 
summarize those of its properties that we shall need.

Let $E$ be compact subset of $\TT$.
Its \emph{logarithmic capacity} $c(E)$ is defined
by the formula
\[
\frac{1}{c(E)}:=\inf_\nu\int_E\int_E\log\Bigl(\frac{2}{|1-\overline{w}z|}\Bigr)\,d\nu(z)\,d\nu(w),
\]
where the infimum is taken over all Borel probability 
measures $\nu$ on $E$. It could happen that the infimum is infinite,
in which case $c(E)=0$.
We shall need the following facts:
\begin{itemize}
\item Capacity is upper semicontinuous: if $E_n\downarrow E$ then $c(E_n)\downarrow c(E)$.
\item For arcs $I$, we have $c(I)\asymp 1/\log(1/|I|)$ as $|I|\to0$.
\item Let $E$ be a generalized Cantor set in the unit circle, 
formed by taking an arc of length $l_0$,
removing  an arc from its center to leave two arcs of length~$l_1$,
removing an arc from each of their centres to leave four arcs of length~$l_2$, 
and so on.
Then 
\begin{equation}
c(E)=0 \iff \sum_{n\ge0}2^{-n}\log(1/l_n)=\infty.
\end{equation}
\end{itemize}
For further information about logarithmic capacity,
we refer to \cite[Chs.~III \& IV]{Ca67}.

We now state Stegenga's characterization of Carleson measures \cite[Theorem~2.3]{St80}.

\begin{theorem}\label{T:Stegenga}
Let $\mu$ be a finite positive Borel measure on $\DD$.
Then $\mu$ is a Carleson measure for~$\cD$ if and only if
there exists a constant $A$ such that,
for every finite set of disjoint closed subarcs 
$I_1,\dots,I_n$ of $\TT$,
\begin{equation}\label{E:Stegenga}
\mu\Bigl(\cup_{k=1}^nS(I_k)\Bigr)
\le A c\Bigl(\cup_{k=1}^n I_k\Bigr).
\end{equation}
\end{theorem}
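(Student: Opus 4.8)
The plan is to prove the two implications separately, using the dual description of Carleson measures in Corollary~\ref{C:Carlabs} together with the definition of logarithmic capacity, both of which are built from the single kernel $L(z,w):=\log(2/|1-\overline{w}z|)$. Throughout I write $\nu_E$ for the equilibrium measure of a compact $E\subset\TT$ with $c(E)>0$, so that by Frostman's theorem its potential $U^{\nu_E}(z):=\int_E L(z,w)\,d\nu_E(w)$ satisfies $U^{\nu_E}=1/c(E)$ quasi-everywhere on $E$ and $U^{\nu_E}\le 1/c(E)$ throughout $\DD$. A useful structural fact, exploited below, is that any such potential is \emph{harmonic} in $\DD$: the kernel's only singularity $z=1/\overline{w}$ lies outside $\overline{\DD}$. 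Since a finite union of arcs always satisfies $c(\cup_k I_k)\ge c(I_1)>0$ and $\mu$ is finite, in both directions one may freely assume that $1/c(\cup_k I_k)$ is as large as desired, the complementary case being trivial.

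For \textbf{necessity}, suppose $\mu$ is Carleson, fix disjoint closed arcs $I_1,\dots,I_n$, put $E:=\cup_k I_k$, $\nu:=\nu_E$, and use the holomorphic test function
\[
f(z):=\int_E k(z,w)\,d\nu(w)\qquad(z\in\DD).
\]
The reproducing property gives $\|f\|_\cD^2=\int_E\int_E k(w',w)\,d\nu(w)\,d\nu(w')=1-\log2+1/c(E)\le 2/c(E)$, the imaginary part dropping out by the symmetry $w\leftrightarrow w'$, while $\operatorname{Re}f(z)=1-\log2+U^{\nu}(z)$. The geometric crux is the pointwise lower bound $U^{\nu}(z)\ge \tfrac12/c(E)$ for $z\in\cup_kS(I_k)$, after which $|f(z)|^2\ge(\operatorname{Re}f(z))^2\ge \tfrac1{16}/c(E)^2$ there. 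Feeding this into $\int_\DD|f|^2\,d\mu\le C\|f\|_\cD^2$ yields $\tfrac1{16}c(E)^{-2}\mu(\cup_kS(I_k))\le 2Cc(E)^{-1}$, which is \eqref{E:Stegenga}. To obtain the lower bound on $U^\nu$, compare $z\in S(I_k)$ with its radial projection $\zeta=z/|z|\in I_k$, for which $U^\nu(\zeta)=1/c(E)$; splitting $U^\nu(\zeta)-U^\nu(z)=\int_E\log(|1-\overline{w}z|/|1-\overline{w}\zeta|)\,d\nu(w)$ at the scale $1-|z|$ and using $|1-\overline{w}z|\le|1-\overline{w}\zeta|+(1-|z|)$ shows the drop is bounded by an absolute constant, which is exactly boundary-Harnack behaviour for the positive harmonic function $1/c(E)-U^\nu$ vanishing on $I_k$.

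For \textbf{sufficiency}, assume \eqref{E:Stegenga}. By Corollary~\ref{C:Carlabs} it suffices to bound $\int_\DD\int_\DD L\,|g|\,|g|\,d\mu\,d\mu$ for $g\ge0$ with $\|g\|_{L^2(\mu)}=1$. Setting $d\lambda:=g\,d\mu$ and $U^\lambda(z):=\int_\DD L(z,w)\,d\lambda(w)$, the quantity to control is the energy $J:=\int_\DD U^\lambda\,d\lambda$. Cauchy--Schwarz gives $J\le K$, where $K:=(\int_\DD (U^\lambda)^2\,d\mu)^{1/2}$, so it is enough to prove the self-improving estimate $K^2\le CJ$: then $K^2\le CJ\le CK$ forces $K\le C$ and hence $J\le C$, which is \eqref{E:CarlabsC}. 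To establish $K^2\le CJ$, write $K^2=2\int_0^\infty t\,\mu(\Omega_t)\,dt$ with $\Omega_t:=\{z\in\DD:U^\lambda(z)>t\}$. Because $U^\lambda$ is harmonic and positive in $\DD$, its super-level sets have no interior maxima; after discarding the part with $|z|\le\tfrac12$ (where $U^\lambda$ is bounded, contributing harmlessly), each $\Omega_t$ is contained in a union $\cup_j S(I_j^t)$ of Carleson boxes over disjoint arcs with shadow $\widehat\Omega_t:=\cup_j I_j^t$, so \eqref{E:Stegenga} gives $\mu(\Omega_t)\le A\,c(\widehat\Omega_t)$. Thus $K^2\le 2A\int_0^\infty t\,c(\widehat\Omega_t)\,dt$, and everything reduces to the \emph{capacitary strong-type inequality}
\[
\int_0^\infty t\,c(\widehat\Omega_t)\,dt\le C\int_\DD U^\lambda\,d\lambda.
\]

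The genuine difficulty is concentrated in the potential-theoretic lemmas, and I expect the \textbf{main obstacle} to be the capacitary strong-type inequality in the last display. The naive route through the weak-type bound $t\,c(\widehat\Omega_t)\le C\lambda(\DD)$ alone is insufficient, since it integrates to a divergent $\int t^{-1}\,dt$; this is precisely the quantitative reason the one-box condition \eqref{E:CarlD} with $\alpha=0$ fails to be sufficient, and why the full multi-arc hypothesis \eqref{E:Stegenga} is needed. The correct argument must upgrade the weak-type bound to the strong-type one by a truncation-and-iteration scheme on the potential (in the spirit of Maz'ya and Adams--Hedberg, or via a Wolff-type inequality), exploiting that summing the capacities of well-separated shadows is far smaller than summing their individual contributions. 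By comparison, the necessity lower bound on $U^\nu$ over the boxes, while technical, is a routine consequence of the boundary regularity of equilibrium potentials.
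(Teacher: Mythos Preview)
The paper does not prove Theorem~\ref{T:Stegenga}; it is quoted as Stegenga's result \cite[Theorem~2.3]{St80} and used as a black box in the proof of Theorem~\ref{T:Wynnsharp}. So there is no ``paper's own proof'' to compare against, and any attempt here is necessarily going beyond what the article does.

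On the substance of your sketch: the necessity direction is broadly the standard argument and is in good shape, modulo the technical point that $k_w\notin\cD$ for $w\in\TT$, so the reproducing identity $\langle f,k_w\rangle_\cD=f(w)$ cannot be invoked directly for boundary $w$; one must instead justify that for $\nu$ of finite logarithmic energy the function $f(z)=\int_E k(z,w)\,d\nu(w)$ lies in $\cD$ with $\|f\|_\cD^2$ equal to the energy, which is a separate (standard) lemma.

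The sufficiency direction, however, has a genuine gap that you yourself flag: the capacitary strong-type inequality $\int_0^\infty t\,c(\widehat\Omega_t)\,dt\le C\int_\DD U^\lambda\,d\lambda$ is precisely the hard content of Stegenga's theorem, and you do not prove it --- you only name the tools (Maz'ya, Adams--Hedberg, Wolff) that would be needed. There is also a second, unacknowledged gap: the passage from the harmonic super-level set $\Omega_t\subset\DD$ to a union of Carleson boxes over a boundary shadow $\widehat\Omega_t$ is not automatic. Harmonicity rules out interior maxima, but it does not by itself force $\Omega_t$ into tents over arcs; one needs a non-tangential control of $U^\lambda$ (or a maximal-function argument) to make this step rigorous, and the capacity appearing in \eqref{E:Stegenga} must then be tied to $c(\widehat\Omega_t)$. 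As written, the sufficiency argument is an outline rather than a proof.
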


With this result under our belt, we can  prove Theorem~\ref{T:Wynnsharp}.

\begin{proof}[Proof of Theorem~\ref{T:Wynnsharp}]
We can suppose  that $\lim_{x\to0}\phi(x)=0$, otherwise condition \eqref{E:Wynn} is vacuous.
Multiplying $\phi$ by a constant, 
we may further suppose that $\phi(1)=1$.
Then, for each $n\ge0$, 
there exists $l_n\in(0,1]$ such that $\phi(l_n)=2^{-n}$.
Since $\phi(x)/x$ is strictly decreasing,  
$l_{n+1}<l_n/2$ for all~$n$.
Let $E$ be the associated generalized Cantor set, 
as described above.
Let $\sigma$ the corresponding Cantor--Lebesgue measure
(namely the probability measure on $E$ giving weight $2^{-n}$
to each of the $2^n$ arcs appearing at the $n$-th stage in the construction of $E$).
Let $(\delta_n)_{n\ge0}$ be any decreasing sequence in $(0,1)$.
Let $\mu_n$ be the measure on $\DD$ defined by
$\mu_n((1-\delta_n)B):=\sigma(B)$ 
(so $\mu_n$ is just $\sigma$, scaled to live on 
the slightly smaller circle $|z|=1-\delta_n$).
Finally, let $\mu:=\sum_{n\ge0}2^{-n}\mu_n$.
We claim that $\mu$ satisfies \eqref{E:Wynn},
and  that we may choose $(\delta_n)$
so that $\mu$ is not a Carleson measure for $\cD$.

Let us  show that $\mu$ satisfies \eqref{E:Wynn}.
Let $I$ be an arc with $|I|<l_0$.
Pick $n$ such that $l_{n+1}\le |I|< l_n$. 
The arcs appearing in the $n$-th stage of the construction of $E$ have length  $l_n$, 
so $I$ can meet at most two of them, 
whence $\sigma(I)\le 2.2^{-n}=4.2^{-(n+1)}=4\phi(l_{n+1})\le 4\phi(|I|)$.
It follows that 
\[
\mu(S(I))=\sum_{k\ge0}2^{-k}\mu_k(S(I))\le\sum_{k\ge0}2^{-k}\sigma(I)=2\sigma(I)\le 8\phi(|I|).
\]
This implies that \eqref{E:Wynn} holds.

Now we show that, if $(\delta_n)$ is chosen appropriately, then $\mu$ is not a Carleson measure for~$\cD$.
By \eqref{E:Wynnsharp}, 
the integral $\int_0^{l_0}(\phi(x)/x)\,dx$ diverges.
On the other hand, it is bounded above by
\[
\sum_{n\ge0}\phi(l_{n})\int_{l_{n+1}}^{l_n}\frac{dx}{x}
=\sum_{n\ge0}2^{-n}\log(l_{n}/l_{n+1})
\le\sum_{n\ge0}2^{-n}\log(1/l_{n+1}).
\]
Therefore $\sum_n2^{-n}\log(1/l_n)=\infty$.
As mentioned at the beginning of the section, 
this implies that $c(E)=0$.
Set $E_{\delta}:=\{\zeta\in\TT: d(\zeta,E)\le \delta\}$.
Since capacity is upper semicontinuous, 
we have $c(E_\delta)\to c(E)=0$ as $\delta\to0$.
Therefore, we may choose $\delta_n$ so that $c(E_{\delta_n})<3^{-n}$
for all $n$.
For each $n$, the set $E_{\delta_n}$  is a finite union of closed arcs, $I_1,\dots,I_k$, 
each of length at least $\delta_n$.
The sets $S(I_j)$ therefore all meet the circle
$|z|=1-\delta_n$.
It follows that
\[
\mu\bigl(\cup_{j=1}^kS(I_j)\bigr)
\ge 2^{-n}\mu_n\bigl(\cup_{j=1}^kS(I_j)\bigr)
= 2^{-n}\sigma\bigl(\cup_{j=1}^kI_j)
\ge 2^{-n}\sigma(E)=2^{-n}.
\]
On the other hand, 
\[
c(\cup_{j=1}^k I_j)=c(E_{\delta_n})<3^{-n}.
\]
Thus, for \eqref{E:Stegenga} to hold, we must have $2^{-n}<A3^{-n}$. 
Obviously, there is no constant $A$ such that this holds for all $n$,
so, by Theorem~\ref{T:Stegenga}, the measure  $\mu$ is not a Carleson measure for $\cD$.
\end{proof}

\section{Remarks about necessity}\label{S:Carlnec}

(i) As mentioned at the beginning of \S\ref{S:pfthm2}, for arcs $I$ we have
$c(I)\asymp 1/\log(1/|I|)$ as $|I|\to0$.
Thus, applying Theorem~\ref{T:Stegenga} with one arc, 
we see that a necessary condition for $\mu$ to be a Carleson measure for $\cD$
is that
\begin{equation}\label{E:Carlnec}
\mu(S(I))=O\Bigl(\Bigl(\log\frac{1}{|I|}\Bigr)^{-1}\Bigr)
\qquad(|I|\to0),
\end{equation}
as claimed in \eqref{E:CarlD}.

(ii) The sufficient condition \eqref{E:Carllog} is not necessary
for $\mu$ to be a Carleson measure for $\cD$. Indeed, 
consider be the measure $\mu$ on $[0,1)$ defined by
\[
\mu\Bigl([1-t,1)\Bigr)=\Bigl(\log\frac{2}{t}\Bigr)^{-1}
\qquad(0\le t<1).
\]
By Theorem~~\ref{T:Stegenga}, $\mu$ is Carleson for $\cD$.
On the other hand, using Fubini's theorem to integrate by parts, we have
\[
\int_{[0,1)}\log\frac{2}{1-wt}\,d\mu(t)
=\int_{x=1-w}^2\frac{dx}{x\log(2w/(w+x-1))}
\qquad(0<w<1),
\]
and the right-hand side clearly tends to infinity as $w\to1^-$,
so \eqref{E:Carllog} fails.

\bibliographystyle{amsplain}

\end{document}